\theoremstyle{plain}
\newtheorem{prop}{Proposition}
\theoremstyle{definition}
\newtheorem{defn}{Definition}
\providecommand{\set}[2][]{
	\ifthenelse{\isempty{#1}}{
		\left\{#2\right\}
	}{
		\left\{\,#1\;\middle|\;#2\,\right\}}
}
\DeclareMathOperator{\pow}{\mathcal{P}} 
\DeclareMathOperator{\im}{Im} 
\DeclareMathOperator{\id}{id} 
\providecommand{\abs}[1]{\left\lvert#1\right\rvert} 
\DeclareMathOperator{\stab}{Stab} 
\DeclareMathOperator{\stabg}{\mathbf{Stab}} 
\title{A partition formula from idempotents}
\author[C. Aten]{Charlotte Aten}
\address{Department of Mathematics\\
University of Denver\\Denver 80208\\USA}
\urladdr{\href{https://aten.cool}{https://aten.cool}}
\email{\href{mailto:charlotte.aten@du.edu}{charlotte.aten@du.edu}}
\subjclass[2020]{11P84, 05E10, 20M30}
\keywords{Partition numbers, idempotents, monoid representations}
\begin{document}

\begin{abstract}
A formula which only involves a partition number and elementary functions is derived by applying Burnside's Lemma to the set of idempotent maps from a set to itself. One side involves a summation over a set closely related to the partition number, however. Some speculation is made as to how to eliminate this summation.
\end{abstract}

\maketitle

\section{Introduction}
This paper was largely written at the beginning of the author's time in graduate school. It was the result of a brief investigation of representations of monoids. While attention is increasingly paid to the theory of linear representations of monoids as in the somewhat recent textbook of Steinberg\cite{steinberg2016}, we consider here set-theoretic representations of a particular finite monoid, which are, according to one's viewpoint, either sets equipped with an action of the monoid in question or homomorphic images of the given monoid which can be found in the monoid of maps from a given set to itself. This latter viewpoint is exactly the monoid theoretic analogue of a permutation representation of a group.

The set theoretic representations of the free idempotent monoid on one generator (that is, the monoid of order \(2\) whose only nonidentity element is idempotent) carry a natural action of a particular group. We use Burnside's Lemma to obtain from this a count of the number of partitions of a number \(n\). This in turn leads to the formula
	\begin{align}
		\label{eq:main_formula}
		\begin{split}
			p(n) &= \frac{1}{n!}\sum_{g\in V_n}\left(\prod_{k=1}^n{(k-1)!^{g(k)}}(g(k))!\binom{n-\sum_{s=1}^{k-1}sg(s)}{g(k)}\right. \\
			&\left.\prod_{v=1}^{g(k)}\binom{n-\sum_{s=1}^{k-1}sg(s)-g(k)-(v-1)(k-1)}{k-1}\right)
		\end{split}
	\end{align}
where \(V_n\) is
	\[
		\set[g\colon\set{1,\dots,n}\to\set{0,\dots,n}]{\sum_{k=1}^nkg(k)=n}.
	\]

In \autoref{sec:monoid_representation} we give some background on the relevant monoid representations for the derivation of \autoref{eq:main_formula}. We continue in \autoref{sec:idempotents} by showing that we can reduce the study of such representations to the study of idempotent maps subject to a natural conjugation action. The orbits and isotropy groups of this action are characterized in \autoref{sec:orbits_isotropy}. In \autoref{sec:burnside} we use Burnside's Lemma to derive \autoref{eq:main_formula} and make some suggestions as to how to eliminate summation over the set \(V\).

Throughout this paper we write \(p(n)\) for the number of partitions of the number \(n\). We use the model-theoretic notation where \(A\) is a set and \(\mathbf{A}\) is a structure with universe \(A\). A distinction is made between equality by definition (written \(A\coloneqq B\)) and the assertion of an equality (written \(A=B\)). We write \(\pow(X)\) to indicate the powerset of a set \(X\), \(\im(f)\) to denote the image of a map \(f\), and \(\stabg(x)\) to denote the stabilizer subgroup for an element \(x\) of a set equipped with a group action. We define \([n]\coloneqq\set{1,2,\dots,n}\).

\section{Monoid representation preliminaries}
\label{sec:monoid_representation}
Let \(\mathbf{M}(S)\) denote the free monoid on the set \(S\), let \(\mathbf{T}(X)\) denote the full transformation monoid on the set \(X\), and let \(\Sigma(X)\) denote the units of \(\mathbf{T}(X)\). That is, \(\mathbf{\Sigma}(X)\) is the symmetric group on \(X\). Given a monoid \(\mathbf{A}\) a \emph{representation} of \(\mathbf{A}\) on \(X\) is a monoid homomorphism \(\rho\colon\mathbf{A}\to\mathbf{T}(X)\). We axiomatize monoids so that the identity element is a constant which must be preserved by homomorphisms. In particular, a representation of a monoid \(\mathbf{A}\) on \(X\) must take the identity \(e\) of \(\mathbf{A}\) to the identity map \(\id_X\) on \(X\). That is to say \(\rho(e)=\id_X\) for all representations \(\rho\). Given \(\sigma\in\Sigma(X)\) and a representation \(\rho\) we have a \emph{conjugate representation} \(\rho^\sigma\) of \(\rho\) given by \(\rho^\sigma(a)\coloneqq\sigma\rho(a)\sigma^{-1}\) for \(a\in A\).

We study the representation theory of the free idempotent monoid on one generator. Let \(\mathbf{B}\coloneqq\mathbf{M}(\set{x})/\langle(x,x^2)\rangle\) and let \(e\) and \(b\) denote the equivalence classes of the identity and \(x\) under \(\langle(x,x^2)\rangle\), respectively. Given a representation \(\rho\colon\mathbf{B}\to\mathbf{T}(X)\) of \(\mathbf{B}\) we define \(f_\rho\coloneqq\rho(b)\). Note that \(f_\rho^2=f_\rho\) so \(f_\rho\) is an idempotent in \(\mathbf{T}(X)\). The representation \(\rho\) is in fact completely determined by \(f_\rho\) since \(\rho(e)=\id_X\) for all representations \(\rho\). In order to study representations of \(\mathbf{B}\) on a set \(X\) we must understand idempotents in \(\mathbf{T}(X)\).

\begin{prop}
Let \(f\in T(X)\). We have that \(f\) is idempotent if and only if \(f=\id_{\im(f)}\cup g\) for some \(g\colon X\setminus\im(f)\to\im(f)\).
\end{prop}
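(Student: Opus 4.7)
The plan is to prove the two directions separately, both by a direct element-chase using the defining property of $\im(f)$.

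For the forward direction, I would assume $f$ is idempotent and show that $f$ restricted to $\im(f)$ is the identity. The key observation is that any $y\in\im(f)$ has the form $y=f(x)$ for some $x\in X$, so applying idempotence gives $f(y)=f(f(x))=f^2(x)=f(x)=y$. Hence $f$ fixes $\im(f)$ pointwise, and the restriction of $f$ to $X\setminus\im(f)$ is a map $g$ into $X$ whose image is contained in $\im(f)$ by definition. Writing $f$ as the union of these two restrictions gives the desired decomposition.

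For the reverse direction, I would assume the decomposition $f=\id_{\im(f)}\cup g$ and verify $f^2=f$ pointwise. For any $x\in X$, the value $f(x)$ lies in $\im(f)$ (whether $x$ itself lies in $\im(f)$ or in $X\setminus\im(f)$), and since $f$ agrees with $\id_{\im(f)}$ on $\im(f)$, we get $f(f(x))=f(x)$.

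There is no real obstacle here; the only mild subtlety is being explicit that the two pieces $\id_{\im(f)}$ and $g$ have disjoint domains $\im(f)$ and $X\setminus\im(f)$ whose union is $X$, so that their set-theoretic union genuinely defines a function on $X$. I would state this once at the start of the argument to justify interpreting the notation $f=\id_{\im(f)}\cup g$ as specifying a well-defined element of $T(X)$.
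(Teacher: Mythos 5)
Your proposal is correct and follows essentially the same argument as the paper: the forward direction shows $f$ fixes $\im(f)$ pointwise by writing $y=f(x)$ and applying idempotence, and the reverse direction checks $f^2=f$ pointwise using that $f(x)$ always lands in $\im(f)$. Your explicit remark that the two pieces have disjoint domains covering $X$ matches the paper's own justification for the decomposition.
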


\begin{proof}
Suppose that \(f\in T(X)\) is idempotent and let \(y\in\im(f)\). We find that \(f(x)=y\) for some \(x\in X\). This implies that \(f(y)=f^2(x)=f(x)=y\) so \(f\) acts as the identity on \(\im(f)\). Define \(g\coloneqq f|_{X\setminus\im(f)}\). Since every member of \(X\) belongs to either \(\im(f)\) or \(X\setminus\im(f)\), but not both, we have that \(f=\id_{\im(f)}\cup g\) where \(g\colon X\setminus\im(f)\to\im(f)\).

Conversely suppose that \(f=\id_{\im(f)}\cup g\) where \(g\colon X\setminus\im(f)\to\im(f)\). Either \(x\in\im(f)\), in which case \(f^2(x)=f(\id(x))=f(x)\), or \(x\in X\setminus\im(f)\), in which case \(f(x)=g(x)\in\im(f)\) and hence \(f^2(x)=f(f(x))=\id(f(x))=f(x)\). We find that \(f\) is idempotent, so we have exactly characterized the idempotents in \(\mathbf{T}(X)\).
\end{proof}

Let \(R_{\mathbf{A}}(X)\) denote the set of representations of a monoid \(\mathbf{A}\) on \(X\).

\begin{prop}
We have that \(\mathbf{\Sigma}(X)\) acts on \(R_{\mathbf{A}}(X)\) by conjugation.
\end{prop}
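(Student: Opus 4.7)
The plan is to verify two things: first, that for each $\sigma \in \Sigma(X)$ and each representation $\rho \in R_{\mathbf{A}}(X)$ the conjugate $\rho^\sigma$ defined by $\rho^\sigma(a) \coloneqq \sigma\rho(a)\sigma^{-1}$ actually lies in $R_{\mathbf{A}}(X)$ (so the proposed operation has the right codomain), and second, that the assignment $(\sigma,\rho) \mapsto \rho^\sigma$ satisfies the two axioms of a group action.

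For the first point, I would check that $\rho^\sigma$ is a monoid homomorphism $\mathbf{A} \to \mathbf{T}(X)$. Preservation of the identity is immediate from $\rho(e) = \id_X$, since $\sigma \id_X \sigma^{-1} = \id_X$. Preservation of multiplication follows by inserting $\sigma^{-1}\sigma = \id_X$ in the middle of $\sigma\rho(ab)\sigma^{-1} = \sigma\rho(a)\rho(b)\sigma^{-1}$ to split it as $\rho^\sigma(a)\rho^\sigma(b)$. This uses only that $\sigma$ is invertible in $\mathbf{T}(X)$ and that $\rho$ is a homomorphism, so nothing beyond the hypotheses.

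For the second point, I would verify the two action axioms. Taking $\sigma = \id_X$ gives $\rho^{\id_X}(a) = \id_X \rho(a) \id_X^{-1} = \rho(a)$, so the identity of $\mathbf{\Sigma}(X)$ fixes every representation. Given $\sigma,\tau \in \Sigma(X)$, a direct computation yields
\[
	(\rho^\sigma)^\tau(a) = \tau\rho^\sigma(a)\tau^{-1} = \tau\sigma\rho(a)\sigma^{-1}\tau^{-1} = (\tau\sigma)\rho(a)(\tau\sigma)^{-1} = \rho^{\tau\sigma}(a),
\]
which is the compatibility condition for a left action.

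Since every step reduces to either the homomorphism axioms for $\rho$ or the group axioms for $\mathbf{\Sigma}(X)$, there is no real obstacle; the only thing to be careful about is matching the convention for conjugation with the convention for a left versus right action, which the computation above resolves in favor of a left action.
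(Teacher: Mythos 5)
Your proposal is correct and follows essentially the same route as the paper: both verify that \(\rho^\sigma\) is again a representation via the same two computations, and both check the identity and compatibility conditions by the same manipulations. The only cosmetic difference is that the paper packages the action as a homomorphism \(\alpha\colon\mathbf{\Sigma}(X)\to\mathbf{\Sigma}(R_{\mathbf{A}}(X))\) and so separately verifies that each \(\alpha(\sigma)\) is a bijection, whereas you use the two-axiom definition of a left action, under which bijectivity is automatic.
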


\begin{proof}
Let \(\mathbf{\Sigma}(R_{\mathbf{A}}(X))\) denote the group of permutations of \(R_{\mathbf{A}}(X)\). Define
	\[
		\alpha\colon\Sigma(X)\to\Sigma(R_{\mathbf{A}}(X))
	\]
by \((\alpha(\sigma))(\rho)\coloneqq\rho^\sigma\). We claim that \(\alpha\) is a group action.

We show that \(\alpha(\sigma)\) is indeed a permutation of \(R_{\mathbf{A}}(X)\). Let \(\rho\colon\mathbf{A}\to\mathbf{T}(X)\) be a representation. We show that \(\rho^\sigma\) is a monoid homomorphism and hence a representation of \(\mathbf{A}\). Let \(a,b\in A\) and use that \(\rho\) is a monoid homomorphism to see that \[\rho^\sigma(e)=\sigma\rho(e)\sigma^{-1}=\sigma\id_X\sigma^{-1}=\id_X\] and \[\rho^\sigma(ab)=\sigma\rho(ab)\sigma^{-1}=\sigma\rho(a)\rho(b)\sigma^{-1}=(\sigma\rho(a)\sigma^{-1})(\sigma\rho(b)\sigma^{-1})=\rho^\sigma(a)\rho^\sigma(b).\] Thus, \(\alpha(\sigma)\in T(R_{\mathbf{A}}(X))\). Suppose that \((\alpha(\sigma))(\rho_1)=(\alpha(\sigma))(\rho_2)\). This implies that \(\sigma\rho_1(a)\sigma^{-1}=\sigma\rho_2(a)\sigma^{-1}\) for all \(a\in A\). Canceling we find that \(\rho_1=\rho_2\) so \(\alpha(\sigma)\) is injective. Given a representation \(\rho\in R_{\mathbf{A}}(X)\) we have that \(\rho^{\sigma^{-1}}\in R_{\mathbf{A}}(X)\) by the same reasoning used above for \(\rho^\sigma\). We find that \[(\alpha(\sigma))(\rho^{\sigma^{-1}})=\sigma\rho^{\sigma^{-1}}\sigma^{-1}=\sigma\sigma^{-1}\rho\sigma\sigma^{-1}=\rho\] so \(\alpha(\sigma)\) is surjective. As we have that \(\alpha\) is a function from \(\Sigma(X)\) to \(\Sigma(R_{\mathbf{A}}(X))\) it remains to show that \(\alpha\) is a group homomorphism.

Note that \[((\alpha(\id))(\rho))(a)=\rho^{\id}(a)=\id\rho(a)\id^{-1}=\rho(a)=(\id_{R_{\mathbf{A}}(X)}(\rho))(a)\] for any \(a\in A\) and any \(\rho\in R_{\mathbf{A}}(X)\) so \(\alpha(\id)=\id_{R_{\mathbf{A}}(X)}\). Given \(\sigma\in\Sigma(X)\) we have that \[((\alpha(\sigma^{-1}))(\rho))(a)=\rho^{\sigma^{-1}}(a)=\sigma^{-1}\rho(a)\sigma=(((\alpha(\sigma))^{-1})(\rho))(a)\] so \(\alpha(\sigma^{-1})=(\alpha(\sigma))^{-1}\). Let \(\sigma,\tau\in\Sigma(X)\). Observe that \[\rho^{\tau\sigma}(a)=\tau\sigma\rho(a)\sigma^{-1}\tau^{-1}=\tau\rho^\sigma(a)\tau^{-1}=(\rho^\sigma)^\tau(a)\] so \(\alpha(\tau\sigma)=\alpha(\tau)\alpha(\sigma)\). Thus, \(\alpha\colon\mathbf{\Sigma}(X)\to\mathbf{\Sigma}(R_{\mathbf{A}}(X))\) is a group homomorphism and hence \(\mathbf{\Sigma}(X)\) acts on \(R_{\mathbf{A}}(X)\) by conjugation.
\end{proof}

\section{Idempotents}
\label{sec:idempotents}
Let \(I(X)\) denote the set of idempotents in \(T(X)\). We have an analogous action of \(\mathbf{\Sigma}(X)\) on \(I(X)\). Given \(\sigma\in\Sigma(X)\) and an idempotent \(f\) we have a \emph{conjugate idempotent} \(f^\sigma\) of \(f\) given by \(f^\sigma(x)\coloneqq\sigma f\sigma^{-1}(x)\) for \(x\in X\).

\begin{prop}
The group \(\mathbf{\Sigma}(X)\) acts on \(I(X)\) by conjugation.
\end{prop}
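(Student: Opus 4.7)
The plan is to recognize that this is essentially a specialization of the previous proposition via the canonical bijection between $R_{\mathbf{B}}(X)$ and $I(X)$, together with a one-line check that conjugation preserves idempotence. First I would verify closure: given $\sigma\in\Sigma(X)$ and $f\in I(X)$, a direct computation
\[
	(f^\sigma)^2=\sigma f\sigma^{-1}\sigma f\sigma^{-1}=\sigma f^2\sigma^{-1}=\sigma f\sigma^{-1}=f^\sigma
\]
shows that $f^\sigma\in I(X)$, so conjugation actually maps $I(X)$ to itself.

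With closure established, I would define $\beta\colon\Sigma(X)\to\Sigma(I(X))$ by $(\beta(\sigma))(f)\coloneqq f^\sigma$ and either repeat the routine verifications from the preceding proposition or transport them. The transport route is cleaner: the assignment $\Phi\colon R_{\mathbf{B}}(X)\to I(X)$ given by $\Phi(\rho)\coloneqq f_\rho=\rho(b)$ is a bijection by the discussion in \autoref{sec:monoid_representation}, and it is equivariant with respect to $\alpha$ and $\beta$ since
\[
	\Phi(\rho^\sigma)=\rho^\sigma(b)=\sigma\rho(b)\sigma^{-1}=\sigma f_\rho\sigma^{-1}=f_\rho^\sigma=(\beta(\sigma))(\Phi(\rho)).
\]
Conjugation by $\Phi$ then turns the group action $\alpha$ from the previous proposition into the group action $\beta$ on $I(X)$.

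Alternatively, if one prefers a self-contained argument, the three remaining facts $\beta(\sigma)\in\Sigma(I(X))$, $\beta(\id)=\id_{I(X)}$, and $\beta(\tau\sigma)=\beta(\tau)\beta(\sigma)$ are each a two-line computation identical in form to the ones carried out for $\alpha$, the final one reducing to $f^{\tau\sigma}=\tau\sigma f\sigma^{-1}\tau^{-1}=(f^\sigma)^\tau$. There is no real obstacle: the only content is the idempotence check, and everything else is algebraic bookkeeping about conjugation inside $\mathbf{T}(X)$.
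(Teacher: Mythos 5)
Your proposal is correct, but your preferred route differs from the paper's. The paper proves this proposition from scratch, verbatim parallel to the preceding one: it checks that \(f^\sigma\) is idempotent, that \(\beta(\sigma)\) is injective and surjective on \(I(X)\), and that \(\beta\) preserves identity, inverses, and composition. Your transport argument instead conjugates the already-established action \(\alpha\) on \(R_{\mathbf{B}}(X)\) by the bijection \(\Phi(\rho)\coloneqq\rho(b)\), using the equivariance computation \(\Phi(\rho^\sigma)=(\beta(\sigma))(\Phi(\rho))\) to conclude \(\beta(\sigma)=\Phi\circ\alpha(\sigma)\circ\Phi^{-1}\), which is automatically a permutation, and \(\sigma\mapsto\Phi\circ\alpha(\sigma)\circ\Phi^{-1}\) is automatically a homomorphism. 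This is logically sound and genuinely more economical --- it reduces the proposition to the single new fact that conjugation preserves idempotence plus the equivariance identity. The one thing to flag is ordering: the bijectivity of \(\Phi\) is only formally proved in the paper in the \emph{later} proposition asserting that \(\gamma\colon\mathbf{G}_{R_{\mathbf{B}}(X)}\to\mathbf{G}_{I(X)}\) is an isomorphism of \(\mathbf{\Sigma}(X)\)-sets, so if you take the transport route you must supply the (easy, non-circular) bijectivity argument up front rather than cite it --- injectivity because \(\rho\) is determined by \(\rho(b)\), surjectivity because any idempotent \(f\) defines a representation via \(b\mapsto f\). The paper's self-contained repetition avoids this dependency at the cost of redundancy; your version buys brevity here and makes the later isomorphism proposition nearly contentless.
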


\begin{proof}
Let \(\mathbf{\Sigma}(I(X))\) denote the group of permutations of \(I(X)\). Define
	\[
		\beta\colon\Sigma(X)\to\Sigma(I(X))
	\]
by \((\beta(\sigma))(f)\coloneqq f^\sigma\). We claim that \(\beta\) is a group action.

We show that \(\beta(\sigma)\) is indeed a permutation of \(I(X)\). Let \(f\colon X\to X\) be an idempotent. We show that \(f^\sigma\) is also an idempotent. Observe that \[(f^\sigma)^2=(\sigma f\sigma^{-1})(\sigma f\sigma^{-1})=\sigma f^2\sigma^{-1}=\sigma f\sigma^{-1}=f^\sigma\] so \(f^\sigma\in I(X)\) and hence \(\beta(\sigma)\in T(I(X))\). Suppose that \((\beta(\sigma))(f)=(\beta(\sigma))(g)\). This implies that \(\sigma f\sigma^{-1}=\sigma g\sigma^{-1}\). Canceling we find that \(f=g\) so \(\beta(\sigma)\) is injective. Given an idempotent \(f\in I(X)\) we have that \(f^{\sigma^{-1}}\in I(X)\) by the same reasoning used above for \(f^\sigma\). We find that \[(\beta(\sigma))(f^{\sigma^{-1}})=\sigma f^{\sigma^{-1}}\sigma^{-1}=\sigma\sigma^{-1}f\sigma\sigma^{-1}=f\] so \(\beta(\sigma)\) is surjective. As we have that \(\beta\) is a function from \(\Sigma(X)\) to \(\Sigma(I(X))\) it remains to show that \(\beta\) is a group homomorphism.

Note that \[((\beta(\id))(f)=f^{\id}=\id f\id^{-1}=f=\id_{I(X)}(f)\] so \(\beta(\id)=\id_{I(X)}\). Given \(\sigma\in\Sigma(X)\) we have that \[(\beta(\sigma^{-1}))(f)=f^{\sigma^{-1}}=\sigma^{-1}f\sigma=((\beta(\sigma))^{-1})(f)\] so \(\beta(\sigma^{-1})=(\beta(\sigma))^{-1}\). Let \(\sigma,\tau\in\Sigma(X)\). Observe that \[f^{\tau\sigma}=\tau\sigma f\sigma^{-1}\tau^{-1}=\tau f^\sigma(a)\tau^{-1}=(f^\sigma)^\tau\] so \(\beta(\tau\sigma)=\beta(\tau)\beta(\sigma)\). Thus, \(\beta\colon\mathbf{\Sigma}(X)\to\mathbf{\Sigma}(I(X))\) is a group homomorphism and hence \(\mathbf{\Sigma}(X)\) acts on \(I(X)\) by conjugation.
\end{proof}

We now exploit the extreme similarity of the conjugation actions of \(\mathbf{\Sigma}(X)\) on \(R_{\mathbf{A}}(X)\) and \(I(X)\) in order to examine representations of \(\mathbf{B}\), the free idempotent monoid on one generator. As we noted previously a representation \(\rho\) of \(\mathbf{B}\) on a set \(X\) is determined by an idempotent \(f_\rho\coloneqq\rho(b)\) in \(\mathbf{T}(X)\). Define \(\gamma\colon R_{\mathbf{B}}(X)\to I(X)\) by \(\beta(\rho)\coloneqq f_\rho\) so that \(\gamma\) is the aforementioned map taking a representation to the idempotent which determines it.

Let \(\mathbf{G}_{R_{\mathbf{B}}(X)}\coloneqq(R_{\mathbf{B}}(X),\Sigma(X))\) and \(\mathbf{G}_{I(X)}\coloneqq(I(X),\Sigma(X))\) denote the \(\mathbf{\Sigma}(X)\)-sets given by the conjugation actions on representations of \(\mathbf{B}\) on \(X\) and idempotents in \(I(X)\).

\begin{prop}
We have that \(\gamma\colon\mathbf{G}_{R_{\mathbf{B}}(X)}\to\mathbf{G}_{I(X)}\) is an isomorphism.
\end{prop}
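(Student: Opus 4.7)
The plan is to verify that $\gamma$ is a bijection of underlying sets and that it respects the $\Sigma(X)$-actions on either side, which together give an isomorphism of $\Sigma(X)$-sets.

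First I would establish that $\gamma$ is well-defined, which is essentially already noted in the text: for any representation $\rho\in R_{\mathbf{B}}(X)$, the image $f_\rho=\rho(b)$ lies in $I(X)$ because $f_\rho^2=\rho(b)\rho(b)=\rho(b^2)=\rho(b)=f_\rho$ (using that $b^2=b$ in $\mathbf{B}$).

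Next I would show $\gamma$ is a bijection by writing down its inverse. Given $f\in I(X)$, by the universal property of $\mathbf{M}(\{x\})/\langle(x,x^2)\rangle$ there is a unique monoid homomorphism $\rho_f\colon\mathbf{B}\to\mathbf{T}(X)$ with $\rho_f(e)=\id_X$ and $\rho_f(b)=f$; this is well-defined because $f^2=f$ respects the relation $(x,x^2)$. Then $\gamma(\rho_f)=f$ and conversely $\rho_{\gamma(\rho)}=\rho$ since the two homomorphisms agree on the generators $e$ and $b$. Hence $\gamma$ is a bijection.

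Finally I would check equivariance, namely that $\gamma(\rho^\sigma)=(\gamma(\rho))^\sigma$ for every $\sigma\in\Sigma(X)$ and $\rho\in R_{\mathbf{B}}(X)$. This is the direct computation
\[
\gamma(\rho^\sigma)=\rho^\sigma(b)=\sigma\rho(b)\sigma^{-1}=\sigma f_\rho\sigma^{-1}=f_\rho^\sigma=(\gamma(\rho))^\sigma.
\]
Combining this with the bijection shown above yields that $\gamma$ is an isomorphism of $\mathbf{\Sigma}(X)$-sets. There is no real obstacle here; the whole argument is a transparent consequence of the presentation of $\mathbf{B}$ by a single idempotent generator and the identical way conjugation was defined on both sides.
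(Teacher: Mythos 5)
Your proposal is correct and follows essentially the same route as the paper: establish that $\gamma$ is a bijection (the paper checks injectivity and surjectivity directly, while you package both into the universal property of the presentation of $\mathbf{B}$, a purely cosmetic difference) and then verify equivariance by the identical one-line computation $\gamma(\rho^\sigma)=\sigma\rho(b)\sigma^{-1}=(\gamma(\rho))^\sigma$. No gaps.
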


\begin{proof}
We show that \(\gamma\) is bijective. Suppose that \(\gamma(\rho_1)=\gamma(\rho_2)\). This implies that \(\rho_1(b)=\rho_2(b)\). Since \(\rho_1(e)=\rho_2(e)\) for any representations \(\rho_1\) and \(\rho_2\) we find that \(\rho_1=\rho_2\). Thus, \(\gamma\) is injective. Given \(f\in I\) we define \(\rho\colon\mathbf{B}\to\mathbf{T}(X)\) by \(\rho(e)\coloneqq\id_X\) and \(\rho(b)\coloneqq f\). Since \(f\) is idempotent it is immediate that \(\rho\) is a monoid homomorphism and hence \(\rho\in R\). It follows that \(\gamma(\rho)=f\) so \(\gamma\) is surjective. We conclude that \(\gamma\) is a bijection.

It remains to show that \(\gamma\) is a \(\mathbf{\Sigma}(X)\)-set morphism. Let \(\sigma\in\Sigma(X)\) and observe that \[\gamma((\alpha(\sigma))(\rho))=\gamma(\rho^\sigma)=\gamma(\sigma\rho\sigma^{-1})=\sigma\rho(b)\sigma^{-1}=(\rho(b))^\sigma=(\beta(\sigma))(\gamma(\rho)),\] as desired.
\end{proof}

\section{Orbits and isotropy groups}
\label{sec:orbits_isotropy}
Since the \(\mathbf{\Sigma}(X)\)-sets \(\mathbf{G}_{R_{\mathbf{B}}(X)}\) and \(\mathbf{G}_{I(X)}\) are isomorphic we can study the more concrete action of individual idempotents under conjugation rather than directly handling the action of representations of \(\mathbf{B}\) under conjugation.

We characterize the orbits of \(I(X)\) under conjugation.

\begin{prop}
Given \(f,g\in I(X)\) we have that \(g=f^\sigma\) for some \(\sigma\in\Sigma(X)\) if and only if there exists a bijection \(h\colon\im(f)\to\im(g)\) such that \(\abs{f^{-1}(x)}=\abs{g^{-1}(h(x))}\) for all \(x\in\im(f)\). This latter condition says that both \(f\) and \(g\) induce the same partition on \(X\) up to relabeling.
\end{prop}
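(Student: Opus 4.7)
The plan is to prove the two directions separately, using the earlier proposition that an idempotent $f \in I(X)$ restricts to the identity on $\im(f)$, so the fibers $\set[x\in\im(f)]{f^{-1}(x)}$ form a partition of $X$ in which each block $f^{-1}(x)$ contains its label $x$.

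For the forward direction, assume $g = f^\sigma = \sigma f \sigma^{-1}$. I would first check that $\sigma$ carries $\im(f)$ bijectively onto $\im(g)$: since $g(y) = \sigma(f(\sigma^{-1}(y)))$, the image of $g$ is exactly $\sigma(\im(f))$. Thus I set $h \coloneqq \sigma|_{\im(f)}$, a bijection from $\im(f)$ onto $\im(g)$. A direct computation then shows that $g^{-1}(h(x)) = \sigma(f^{-1}(x))$ for each $x \in \im(f)$, since $\sigma f \sigma^{-1}(y) = \sigma(x)$ unwinds to $\sigma^{-1}(y) \in f^{-1}(x)$. Because $\sigma$ is a bijection, this yields $\abs{f^{-1}(x)} = \abs{g^{-1}(h(x))}$.

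For the reverse direction, I would use the hypothesized bijection $h\colon\im(f)\to\im(g)$ to build $\sigma$ blockwise along the fiber partitions. For each $x \in \im(f)$ both $f^{-1}(x)$ and $g^{-1}(h(x))$ have the same cardinality and contain the distinguished elements $x$ and $h(x)$ respectively (using the previous proposition, which ensures $f(x)=x$ and $g(h(x))=h(x)$). So I pick any bijection $\sigma_x\colon f^{-1}(x) \to g^{-1}(h(x))$ subject to the single constraint $\sigma_x(x) = h(x)$. Then I define $\sigma\colon X \to X$ by $\sigma(y) \coloneqq \sigma_x(y)$ whenever $y \in f^{-1}(x)$. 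This is well defined and bijective because the fibers of $f$ partition $X$, the fibers of $g$ partition $X$, and $h$ bijects their index sets.

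To finish I would verify $g\sigma = \sigma f$. Take $y \in X$ and set $x \coloneqq f(y) \in \im(f)$, so $y \in f^{-1}(x)$. Then $\sigma(y) \in g^{-1}(h(x))$, giving $g(\sigma(y)) = h(x)$, while $\sigma(f(y)) = \sigma(x) = \sigma_x(x) = h(x)$, so the two agree and $g = \sigma f \sigma^{-1} = f^\sigma$. The only mildly delicate point, and the main thing to be careful about, is arranging the blockwise bijections so that they respect the distinguished fixed points of each fiber; everything else is just bookkeeping across the two partitions induced by $f$ and $g$.
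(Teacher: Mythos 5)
Your proposal is correct and follows essentially the same route as the paper: restricting $\sigma$ to $\im(f)$ and to the fibers for the forward direction, and in the converse gluing fiberwise bijections chosen to send each distinguished fixed point $x$ to $h(x)$, then checking $g\sigma=\sigma f$. The only cosmetic difference is that you verify the conjugacy by chasing $y\mapsto f(y)$ forward while the paper chases $\sigma^{-1}(x)$ backward; the substance is identical.
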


\begin{proof}
Suppose that \(g=f^\sigma\) for some \(\sigma\in\Sigma(X)\) and take \(h\coloneqq\sigma|_{\im(f)}\). Since \(h\) is the restriction of a bijection we have that \(h\) is a bijection from \(\im(f)\) to \(\im(h)\). Since \(g=f^\sigma\) we have that \(g=\sigma f\sigma^{-1}\) so \(g\sigma=\sigma f\) and hence \[\sigma(\im(f))=\sigma(f(X))=g(\sigma(X))=g(X)=\im(g).\] It follows that \[h(\im(f))=\sigma(\im(f))=\im(g)\] so \(h\) is a bijection from \(\im(f)\) to \(\im(g)\). Fix \(x\in\im(f)\) and let
	\[
		\phi_x\colon f^{-1}(x)\to g^{-1}(h(x))
	\]
be given by \(\phi_x\coloneqq\sigma|_{f^{-1}(x)}\). This map is well-defined, as if \(s\in f^{-1}(x)\) then \(f(s)=x\) and hence \[g(\phi_x(s))=g(\sigma(s))=\sigma f\sigma^{-1}(\sigma(s))=\sigma f(s)=\sigma(x)=h(x)\] so \(\phi_x(s)\in g^{-1}(h(x))\). An identical argument shows that \(\phi_x\) has an inverse map \(\phi_x^{-1}\coloneqq\sigma^{-1}|_{g^{-1}(h(x))}\). Thus, \(\phi_x\colon f^{-1}(x)\to g^{-1}(h(x))\) is a bijection. This establishes that \(\abs{f^{-1}(x)}=\abs{g^{-1}(h(x))}\) for each \(x\in\im(f)\) so there is indeed a bijection \(h\colon\im(f)\to\im(g)\) such that \(\abs{f^{-1}(x)}=\abs{g^{-1}(h(x))}\) for all \(x\in\im(f)\).

Conversely, suppose that \(f,g\in T(X)\) are idempotents such that there exists a bijection \(h\colon\im(f)\to\im(g)\) with \(\abs{f^{-1}(x)}=\abs{g^{-1}(h(x))}\) for all \(x\in\im(f)\). Since \(f\) and \(g\) are idempotents we always have that \(x\in f^{-1}(x)\) and \(h(x)\in g^{-1}(h(x))\) for any \(x\in\im(f)\). For each \(x\in\im(f)\) we can then choose a bijection
	\[
		\phi_x\colon f^{-1}(x)\to g^{-1}(h(x))
	\]
such that \(\phi_x(x)=h(x)\). Define \(\sigma\coloneqq\bigcup_{x\in\im(f)}\phi_x\). Since the \(\phi_x\) are bijections whose domains and codomains do not intersect we have that \(\sigma\in\Sigma(X)\). We claim that \(g=f^\sigma\). To see this, take \(x\in X\). Since \(g(x)=s\) for some \(s\in\im(g)\) we have that \(x\in g^{-1}(h(t))\) where \(h(t)=s\) for a unique \(t\in\im(f)\). It follows that \(\sigma^{-1}(x)=\phi_t^{-1}(x)\in f^{-1}(t)\) so \[f^\sigma(x)=\sigma f\sigma^{-1}(x)=\sigma t=\phi_t(t)=h(t)=s=g(x).\] This shows that \(g=f^\sigma\), as desired.
\end{proof}

We can produce an \(f\in I(X)\) with \(\abs{\im(f)}\) having any cardinality \(k\in[\abs{X}]\). Such an \(f\) induces a partition of \(X\) into \(k\) parts. Our previous result then implies that the orbits of idempotents \(f\in I(X)\) with \(\abs{\im(f)}=k\) under \(\mathbf{\Sigma}(X)\) are in bijection with partitions of \(\abs{X}\) into \(k\) parts. It follows that the number of orbits of all idempotents on \(X\) is the number of partitions of \(\abs{X}\).

Now that we have characterized the orbits of the idempotents in \(\mathbf{T}(X)\) under the conjugation action of \(\mathbf{\Sigma}(X)\) we proceed to describe the isotropy subgroups for idempotents under this action.

\begin{defn}[The group \(\mathbf{G}_U\)]
Let \(f\in I(X)\). Define \(\eta\colon\im(f)\to\pow(X)\) by \[\eta(x)\coloneqq\set[y\in\im(f)]{\abs{f^{-1}(x)}=\abs{f^{-1}(y)}}.\] Given \(U\in\im(\eta)\) choose a representative \(x_U\in U\). Let \(\mathbf{H}_U\coloneqq\mathbf{\Sigma}(U)\). Define \(\mathbf{A}_U\coloneqq\mathbf{\Sigma}(f^{-1}(x_U)\setminus\set{x_U})\) and define \(\mathbf{K}_U\coloneqq\mathbf{A}_U^U\). Let \(\alpha\colon\mathbf{H}_U\to\mathbf{K}_U\) be given by \[\alpha(h)((a_u)_{u\in U})\coloneqq(a_{h(u)})_{u\in U}.\] Let \(\mathbf{G}_U\) denote the group with universe \(K_U\times H_U\) whose identity element is \(((e_U)_{u\in U},\id_U)\) where \(e_U\) is the identity in \(\mathbf{A}_U\), whose inverse operation is given by \[((a_u)_{u\in U},\varphi)^{-1}\coloneqq\left(\left(a_{\varphi_1^{-1}(u)}^{-1}\right)_{u\in U},\varphi_1^{-1}\right),\] and whose multiplication is given by \[((a_u)_{u\in U},\varphi_1)((b_u)_{u\in U},\varphi_2)\coloneqq(\alpha(\varphi_2)((a_u)_{u\in U})(b_u)_{u\in U},\varphi_1\varphi_2).\]
\end{defn}

We show that \(\mathbf{G}_U\) is indeed a group. First note that \(\alpha\) is a group homomorphism as \[\alpha(\id_U)((a_u)_{u\in U})=(a_{\id_U(u)})_{u\in U}=(a_u)_{u\in U}\] so \(\alpha(\id_U)\) is the identity in \(\mathbf{K}_U\), given \(h\in H_U\) we have that \[\alpha(h^{-1})((a_u)_{u\in U})=(a_{h^{-1}(u)})_{u\in U}=(\alpha(h))^{-1}((a_u)_{u\in U}),\] and given \(h_1,h_2\in H_U\) we have that \[(\alpha(h_1)\circ\alpha(h_2))((a_u)_{u\in U})=\alpha(h_1)((a_{h_2(u)})_{u\in U})=(a_{h_1h_2(u)})_{u\in U}=\alpha(h_1h_2)((a_u)_{u\in U}).\]

It is immediate that \(((e_U)_{u\in U},\id_U)\) is an identity element and that inverses are appropriately defined. It remains to show that the given multiplication is associative. Observe that
	\begin{align*}
		(((a_u)_{u\in U},\varphi_1)((b_u)_{u\in U},\varphi_2))((c_u)_{u\in U},\varphi_3) &= (\alpha(\varphi_2)((a_u)_{u\in U})(b_u)_{u\in U},\varphi_1\varphi_2)((c_u)_{u\in U},\varphi_3) \\
		&= (\alpha(\varphi_3)((a_{\varphi_2(u)}b_u)_{u\in U})(c_u)_{u\in U},\varphi_1\varphi_2\varphi_3) \\
		&= ((a_{\varphi_2\varphi_3(u)}b_{\varphi_3(u)}c_u)_{u\in U},\varphi_1\varphi_2\varphi_3) \\
		&= ((a_u)_{u\in U},\varphi_1)((b_{\varphi_3(u)}c_u)_{u\in U},\varphi_2\varphi_3) \\
		&= ((a_u)_{u\in U},\varphi_1)(((b_u)_{u\in U},\varphi_2)((c_u)_{u\in U},\varphi_3))
	\end{align*}
so \(\mathbf{G}_U\) is a group. Note that different choices of a representative \(x_U\) yield isomorphic groups so the notation \(\mathbf{G}_U\) is only suppressing an isomorphism.

\begin{prop}
The stabilizer of \(f\) satisfies \[\stabg(f)\cong\prod_{\mathclap{U\in\im(\eta)}}\mathbf{G}_U.\]
\end{prop}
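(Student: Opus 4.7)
The plan is to construct an explicit isomorphism
\[
\Phi\colon\stabg(f)\to\prod_{U\in\im(\eta)}\mathbf{G}_U
\]
by reading off how an element $\sigma\in\stabg(f)$ must act on each ``level set of fiber sizes''. First I would record the basic reformulation: $\sigma\in\stabg(f)$ iff $\sigma f=f\sigma$, which forces $f(\sigma(y))=\sigma(f(y))$ for every $y\in X$ and in particular $\sigma(\im(f))=\im(f)$. Applying the orbit characterization proved above with $g=f$, the restriction $\varphi\coloneqq\sigma|_{\im(f)}$ is a bijection of $\im(f)$ with $\abs{f^{-1}(x)}=\abs{f^{-1}(\varphi(x))}$, so $\varphi$ stabilizes each block $U\in\im(\eta)$ setwise and decomposes as a family $(\varphi_U)_{U\in\im(\eta)}$ with $\varphi_U\in\mathbf{H}_U=\mathbf{\Sigma}(U)$.

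Next I would fix, for each $U$ and each $u\in U$, a bijection $\psi_u\colon f^{-1}(u)\setminus\set{u}\to f^{-1}(x_U)\setminus\set{x_U}$, taking $\psi_{x_U}$ to be the identity. Since $\sigma(u)=\varphi_U(u)$ and $\sigma$ carries $f^{-1}(u)$ bijectively onto $f^{-1}(\varphi_U(u))$, the restriction of $\sigma$ transports under the $\psi$'s to
\[
a_u\coloneqq\psi_{\varphi_U(u)}\circ\sigma|_{f^{-1}(u)\setminus\set{u}}\circ\psi_u^{-1}\in\mathbf{A}_U,
\]
and I set $\Phi(\sigma)\coloneqq\bigl((a_u)_{u\in U},\varphi_U\bigr)_{U\in\im(\eta)}$. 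Bijectivity of $\Phi$ would follow from the partition $X=\bigsqcup_U\bigsqcup_{u\in U}f^{-1}(u)$: the condition $\sigma f=f\sigma$ shows that $\sigma$ is determined by the $\varphi_U$'s together with its restrictions on the fibers, while any choice of data $\bigl((a_u)_{u\in U},\varphi_U\bigr)$ reassembles into a unique element of $\stabg(f)$ via the inverse formulas.

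For the homomorphism property, given $\sigma_1,\sigma_2\in\stabg(f)$ with data $((a_u),\varphi_1)$ and $((b_u),\varphi_2)$ in a fixed block $U$, inserting $\psi_{\varphi_2(u)}^{-1}\circ\psi_{\varphi_2(u)}$ in the middle of the composite yields
\[
\psi_{\varphi_1\varphi_2(u)}\circ(\sigma_1\sigma_2)|_{f^{-1}(u)\setminus\set{u}}\circ\psi_u^{-1}=a_{\varphi_2(u)}\circ b_u,
\]
which is exactly the $u$-th component of $\alpha(\varphi_2)((a_u))(b_u)$; paired with $\varphi_1\varphi_2$ in the head slot, this matches the multiplication of $\mathbf{G}_U$. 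The main obstacle I anticipate is keeping the $\psi_u$ identifications honest. In particular, the reason the twisted formula $\alpha(\varphi_2)((a_u))(b_u)$ appears, rather than its opposite, is precisely that in $\sigma_1\sigma_2$ the permutation $\sigma_2$ acts first and reindexes which fiber $\sigma_1$ is being read on; this is the semidirect-product-like twist baked into $\mathbf{G}_U$, and verifying the bookkeeping carefully is the only nontrivial step.
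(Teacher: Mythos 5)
Your proposal is correct and follows essentially the same route as the paper: fix reference bijections between each fiber $f^{-1}(u)\setminus\set{u}$ and the fiber of the chosen representative $x_U$, read off $\sigma$ as a pair (transported fiber permutations, restriction to $U$), and verify that the reindexing by the permutation acting first produces exactly the twisted multiplication of $\mathbf{G}_U$. If anything, your treatment of bijectivity via the decomposition $X=\bigsqcup_{u}f^{-1}(u)$ is more explicit than the paper's, which only asserts surjectivity and the passage to the product over $U\in\im(\eta)$ in a single sentence.
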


\begin{proof}
Consider a particular \(U\in\im(\eta)\). For each \(u\in U\) fix a bijection
	\[
		r_u\colon f^{-1}(x_U)\setminus\set{x_U}\to f^{-1}(u)\setminus\set{u}.
	\]
Given \(\sigma\in\stab(f)\) define \(\psi_{\sigma,u}\coloneqq r_{\sigma(u)}^{-1}\circ\sigma\circ r_u\) and define \(\varphi_\sigma\coloneqq\sigma|_U\). Let \(\gamma\colon\stab(f)\to G_U\) be given by \[\gamma(\sigma)\coloneqq((\psi_{\sigma,u})_{u\in U},\varphi_\sigma).\] We claim that \(\gamma\) is a homomorphism.

Since \[\gamma(\id_X)=((\psi_{\id_X,u})_{u\in U},\id_U)=((e_U)_{u\in U},\id_U)\] we have that \(\gamma(\id_X)\) is the identity of \(\mathbf{G}_U\). We have that
	\begin{align*}
		\gamma(\sigma^{-1}) = ((\psi_{\sigma^{-1},u})_{u\in U},\varphi_{\sigma^{-1}})
		&= ((r_{\sigma^{-1}(u)}^{-1}\circ\sigma^{-1}\circ r_u)_{u\in U},\varphi_\sigma^{-1}) \\
		&= (((r_u^{-1}\circ\sigma\circ r_{\sigma^{-1}(u)})^{-1})_{u\in U},\varphi_\sigma^{-1}) \\
		&= \left(\left(\psi_{\sigma,\sigma^{-1}(u)}^{-1}\right)_{u\in U},\varphi_\sigma^{-1}\right) \\
		&= \left(\left(\psi_{\sigma,\varphi_\sigma^{-1}(u)}^{-1}\right)_{u\in U},\varphi_\sigma^{-1}\right) \\
		&= ((\psi_{\sigma,u})_{u\in U},\varphi_\sigma)^{-1} \\
		&= \gamma(\sigma)^{-1}
	\end{align*}
so \(\gamma\) respects taking inverses.

Observe that
	\begin{align*}
		\gamma(\tau)\gamma(\sigma) &= ((\psi_{\tau,u})_{u\in U},\varphi_\tau)((\psi_{\sigma,u})_{u\in U},\varphi_\sigma) \\
		&= (\varphi_\sigma((\psi_{\tau,u})_{u\in U})(\psi_{\sigma,u})_{u\in U},\varphi_\tau\varphi_\sigma) \\
		&= ((\psi_{\tau,\sigma(u)}\psi_{\sigma,u})_{u\in U},\varphi_{\tau\sigma}) \\
		&= ((r_{\tau\sigma(u)}^{-1}\circ\tau\circ r_{\sigma(u)}\circ r_{\sigma(u)}^{-1}\circ\sigma\circ r_u)_{u\in U},\varphi_{\tau\sigma}) \\
		&= ((r_{\tau\sigma(u)}^{-1}\circ\tau\sigma\circ r_u)_{u\in U},\varphi_{\tau\sigma}) \\
		&= ((\psi_{\tau\sigma,u})_{u\in U},\varphi_{\tau\sigma}) \\
		&= \gamma(\tau\sigma)
	\end{align*}
so \(\gamma\) is a group homomorphism.

We have that \(\gamma\) is surjective. Since each member of \(\stab(f)\) can be written as a product of permutations carried by the \(U\in\im(\eta)\) it follows that \(\stabg(f)\cong\prod_{U\in\im(\eta)}\mathbf{G}_U\).
\end{proof}

\section{Burnside's Lemma}
\label{sec:burnside}
As an application we can now use Burnside's Lemma to count the number of partitions \(p(n)\) of a natural number \(n\). By our previous work we have that for each finite set \(X\) we have
	\[
		p(\abs{X})=\frac{1}{\abs{X}!}\sum_{f\in I(X)}\abs{\stab(f)}.
	\]
Suppose that \(X=[n]\). We have that
	\[
		p(n)=\frac{1}{n!}\sum_{f\in I([n])}\abs{\stab(f)}.
	\]

Given \(f\in I([n])\) define \(\varpi(f)\colon[n]\to\set{0,\dots,n}\) by
	\[
		\varpi(f)(k)\coloneqq\abs{\set[x\in n]{\abs{f^{-1}(x)}=k}}.
	\]
Our characterization of \(\stab(f)\) shows that
	\[
		\abs{\stab(f)}=\prod_{k=1}^n(k-1)!^{\varpi(f)(k)}(\varpi(f)(k))!.
	\]
It remains to count how many \(f\in I([n])\) determine each map \(g\colon[n]\to\set{0,\dots,n}\), for if \(\varpi(f)=g\) then
	\[
		\abs{\stab(f)}=\prod_{k=1}^n(k-1)!^{g(k)}(g(k))!.
	\]

When \(g=\varpi(f)\) for some \(f\in I([n])\) we have that
	\begin{align*}
		&\abs{\set[f\in I(n)]{\varpi(f)=g}} = \\ &\prod_{k=1}^n\binom{n-\sum_{s=1}^{k-1}sg(s)}{g(k)}\prod_{v=1}^{g(k)}\binom{n-\sum_{s=1}^{k-1}sg(s)-g(k)-(v-1)(k-1)}{k-1}.
	\end{align*}
Let \(V_n\coloneqq\varpi(I([n]))\). Note that
	\[
		V_n=\set[g\colon{[n]}\to\set{0,\dots,n}]{\sum_{k=1}^nkg(k)=n}.
	\]
We find that \autoref{eq:main_formula} follows immediately.

Unfortunately, the summation over \(V_n\) is forcing us to sum over all possible \(n\)-tuples of integers between \(0\) and \(n\) which could be the numbers of parts of each given size for some partition of \(n\), so we would need to perform a task very similar to finding all partitions of \(n\) in order to compute \(p(n)\) directly by this formula. It may, however, be possible leverage this formula to obtain a different formula for \(p(n)\) which does not have this inadequacy.

For example, we could rewrite \autoref{eq:main_formula} as
	\begin{align*}
		n!p(n) &= \sum_{g\in V_n}\left(\prod_{k=1}^n{(k-1)!^{g(k)}}(g(k))!\binom{n-\sum_{s=1}^{k-1}sg(s)}{g(k)}\right. \\
		&\left.\prod_{v=1}^{g(k)}\binom{n-\sum_{s=1}^{k-1}sg(s)-g(k)-(v-1)(k-1)}{k-1}\right).
	\end{align*}
Summing both sides from \(n=1\) to \(n=m\) for some natural number \(m\) yields
	\begin{align*}
		\sum_{n=1}^mn!p(n) &= \sum_{n=1}^m\sum_{g\in V_n}\left(\prod_{k=1}^n{(k-1)!^{g(k)}}(g(k))!\binom{n-\sum_{s=1}^{k-1}sg(s)}{g(k)}\right. \\
		&\left.\prod_{v=1}^{g(k)}\binom{n-\sum_{s=1}^{k-1}sg(s)-g(k)-(v-1)(k-1)}{k-1}\right) \\
		&= \sum_{g\in\bigcup_{n=1}^mV_n}\left(\prod_{k=1}^n{(k-1)!^{g(k)}}(g(k))!\binom{n-\sum_{s=1}^{k-1}sg(s)}{g(k)}\right. \\
		&\left.\prod_{v=1}^{g(k)}\binom{n-\sum_{s=1}^{k-1}sg(s)-g(k)-(v-1)(k-1)}{k-1}\right).
	\end{align*}
The right-hand side of this last formula looks more promising, but when we sum over choices of \(g\) from any of the \(V_n\) we don't quite obtain a summation over \(\set{0,\dots,m}^m\) since some such tuples correspond to partitions of numbers greater than \(m\).

Another idea would be to attempt to reformulate the more desirable quantity
	\begin{align*}
		\sum_{g\colon[m]\to\set{0,\dots,m}}\left(\prod_{k=1}^{n(g)}{(k-1)!^{g(k)}}(g(k))!\binom{n(g)-\sum_{s=1}^{k-1}sg(s)}{g(k)}\right. \\
		\left.\prod_{v=1}^{g(k)}\binom{n(g)-\sum_{s=1}^{k-1}sg(s)-g(k)-(v-1)(k-1)}{k-1}\right)
	\end{align*}
where \(n(g)\coloneqq\sum_{\ell=1}^m\ell g(\ell)\) in terms of \autoref{eq:main_formula}. In any case, an elementary formula for the partition numbers remains elusive.

\printbibliography

\end{document}